\numberwithin{equation}{section}
 \newtheorem{thm}{Theorem}[section]
 \newtheorem{cor}[thm]{Corollary}
 {\rm}
 \newtheorem{rem}[thm]{Remark}
\def\x{\mathbf{x}}
\def\K{\mathbf{K}}
\def\B{\mathbf{B}}
\def\M{\mathbf{M}}
\def\B{\mathbf{B}}
\def\X{\mathbf{X}}
\def\P{\mathbf{P}}
\def\K{\mathbf{K}}
\def\R{\mathbb{R}}
\def\N{\mathbb{N}}
\def\P{\mathbf{P}}
\def\K{\mathbf{K}}
\def\B{\mathbf{B}}
\def\Q{\mathbf{Q}}
\def\M{\mathbf{M}}
\def\X{\mathbf{X}}
\def\v{\mathbf{v}}
\def\f{\mathbf{f}}
\def\f{\mathbf{f}}
\def\w{\mathbf{w}}
\def\x{\mathbf{x}}
\def\N{\mathbb{N}}
\def\om{\mathbf{\Omega}}
\def\bphi{\boldsymbol{\phi}}
\def\bmu{\boldsymbol{\mu}}
\def\balpha{\boldsymbol{\alpha}}
\def\bbeta{\boldsymbol{\beta}}
\def\bgamma{\boldsymbol{\gamma}}
\def\B{\mathbf{B}}
\def\C{\mathbf{C}}
\def\X{\mathbf{X}}
\def\supmu{{\rm supp}(\phi)}
\def\bphi{\boldsymbol{\phi}}
\begin{document}
\title[Rank conditions for exactness of SDP-relaxations]
{Rank conditions for exactness of
semidefinite relaxations  in polynomial optimization}

\thanks{The author is supported by the AI Interdisciplinary Institute ANITI  funding through the french program
``Investing for the Future PI3A" under the grant agreement number ANR-19-PI3A-0004. This research is also part of the programme DesCartes and is supported by the National Research Foundation, Prime Minister's Office, Singapore under its Campus for Research Excellence and Technological Enterprise (CREATE) programme.}
\author{Jean B. Lasserre}
\address{LAAS-CNRS and Toulouse School of Economics (TSE)\\
University of Toulouse\\
LAAS, 7 avenue du Colonel Roche, BP 54200\\
31031 Toulouse C\'edex 4, France}
\email{lasserre@laas.fr}

\date{}
\begin{abstract}
We consider the Moment-SOS hierarchy in polynomial optimization.
We first provide a sufficient condition to solve the truncated $\K$-moment problem
associated with a given degree-$2n$ pseudo-moment sequence $\bphi^n$ and a semi-algebraic set $\K\subset\R^d$. 
Namely, let $2v$ be the maximum degree of the polynomials that describe $\K$. If
the rank $r$ of its associated moment matrix is less than $n-v+1$, then $\bphi^n$
has an atomic representing measure supported on at most $r$ points of $\K$.
When used at step-$n$ of the Moment-SOS hierarchy, it provides a sufficient condition 
to guarantee its finite convergence (i.e., the optimal value of the corresponding 
degree-$n$ semidefinite relaxation of the hierarchy is the global minimum). 
For Quadratic Constrained Quadratic Problems (QCQPs) one may also recover global minimizers from
the optimal pseudo-moment sequence.
Our condition is in the spirit of Blekherman's rank condition and while on the one-hand it is more restrictive, on the other hand it applies to constrained POPs as it provides a \emph{localization} on $\K$ for the representing measure.
\end{abstract}

\maketitle

\section{Introduction}

Consider the {\em polynomial} optimization problem (POP):
\begin{eqnarray}
\label{def-pb-1}
\P:\quad f^*&=&\min_\x\,\{\,f(\x):\:\x\,\in\,\K\,\}\\
\label{set-K}
\mbox{with}\quad\K&:=&\{\,\x\in\R^n:\: g_j(\x)\,\geq\,0,\quad j=1,\ldots,m\,\}\,,
\end{eqnarray}
where $f,g_j\in\R[\x]$ are polynomials. 
In particular, if $\mathrm{deg}(f)\leq 2$ and $\mathrm{deg}(g_j)\leq 2$ for all 
$j=1,\ldots,m$, then $\P$ is called a quadratic constrained quadratic problem (QCQP). This latter class
contains many important problems for which computing (or even approximating) $f^*$  ``efficiently"
is a scientific challenge; indeed $\P$ is NP-hard in general.

A popular strategy to compute (or approximate) $f^*$ is to provide a monotone non decreasing sequence
of lower bounds that converges to $f^*$ from below.  Some LP and semidefinite (SDP) relaxations
introduced in the nineties \cite{Sherali} and the 2000's \cite{lass-cras,lass-siopt-2001} provide an example of such a strategy and the interested reader 
is referred to e.g. \cite{lass-mor,Laurent} for an analysis of their respective advantages and drawbacks. 
 
This paper is concerned with  the {\em Moment-SOS hierarchy} \cite{book-hierarchy,lass-icm}
which applies to solve not only POPs but also many important problems 
in Science \& Engineering, provided that they 
are modeled as instances of the Generalized Moment Problem (GMP) with algebraic data;
the interested reader is also referred to e.g. \cite{lass-acta} for a recent exposition of such applications.

The Moment-SOS hierarchy for solving $\P$ 
consists of a nested sequence of semidefinite relaxations $(\Q_n)_{n\in\N}$ of $\P$,
whose size increases with $n$ and whose associated sequence of optimal values  $(\rho_n)_{n\in\N}$ 
is monotone non decreasing and converges to $f^*$ as $n$ increases.  Moreover
it has been shown \cite{Nie-1,Nie-2} that generically (i) its convergence is finite, i.e., $\rho_n=f^*$
at some step-$n$ of the hierarchy, and (ii) 
extraction of global minimizers can be done 
by exploiting a flatness condition due to Curto \& Fialkow \cite{Curto} (and related
to a certain rank condition on moment matrices). When $\rho_n=f^*$ the semidefinite relaxation
$\Q_n$ is said to be \emph{exact}.

\paragraph{On rank conditions}Let $\bphi=(\phi_{\balpha})_{\balpha\in\N^d_{2n}}$, be a 
real sequence (up to degree $2n$) with 
positive semidefinite (psd) moment matrix $\M_n(\bphi)\succeq0$ (see definition in \S \ref{notation}).  Then
$\bphi$ has a representing measure if there exists a measure $\phi$ on $\R^d$ such that
$\phi_{\balpha}=\int \x^{\balpha}\,d\phi$ for all $\balpha\in\N^d_{2n}$.
To identify whether a sequence $\bphi$ has a representing measure,
an important result  is the (unconstrained) flatness condition of Curto and Fialkow \cite{Curto,Fialkow} which states that if
\begin{equation}
\label{eq:flat-1}
 \mathrm{rank}(\M_n(\bphi))\,=\,\mathrm{rank}(\M_{n-1}(\bphi))\,,
\end{equation}
then $\bphi$ has an atomic representing measure on $\R^d$ supported on $\mathrm{rank}(\M_n(\bphi))$ atoms.
Similarly with $d_j:=\lceil\mathrm{deg}(g_j)/2\rceil$, and
$g_j(\x)=\sum_{\bbeta}g_{j,\bbeta}\,\x^{\bbeta}$, $j=1,\ldots,m$,
let $g_j\,\bphi=(g_j\bphi)_{\balpha}$, $\balpha\in\N^d_{2n}$, be the sequence where
$(g_j\,\phi)_{\balpha}=\sum_{\bbeta}g_{j,\bbeta}\,\phi_{\balpha+\bbeta}$ for all $\balpha\in\N^d_{2n}$.
Then suppose that 
$\bphi$ also satisfies $\M_{n-d_j}(g_j\,\bphi)\succeq0$, $j=1,\ldots,m$, and let  $v:=\max_j d_j$.
The constrained flatness condition of Curto and Fialkow \cite{Curto,Fialkow} states that if
\begin{equation}
\label{eq:flat-2}
 \mathrm{rank}(\M_n(\bphi))\,=\,\mathrm{rank}(\M_{n-v}(\bphi))\,,
 \end{equation}
then $\bphi$ has an atomic representing measure supported on $\mathrm{rank}(\M_{n}(\bphi))$
atoms in $\K$.
Finally,  by a result of Blekherman \cite{Blek}, it turns out that if
\begin{equation}
\label{eq:flat-3}
 \mathrm{rank}(\M_n(\bphi))\,\leq\,\left\{\begin{array}{rl}3n-3&\mbox{if $n\geq 3$}\\
 6&\mbox{if $n=2$,}\end{array}\right.\\
\end{equation}
then the subsequence $\bphi^{\hat{n}}$ ($:= (\phi_{\balpha})_{\balpha\in\N^d_{2n-1}}$) 
of $\bphi$ has a representing measure on $\R^d$; see \cite{Fialkow}\footnote{In \cite[Theorem 2.36]{lass-book}
Blekherman's result \eqref{eq:flat-3} is incorrectly stated. Indeed  
only the subsequence $\bphi^{n,2n-1}$ of moments up to degree $2n-1$ has a representing measure
and not the whole sequence $\bphi$ of moments up to degree $2n$ in general.}.

\begin{rem}
Importantly, notice that 
in contrast to \eqref{eq:flat-1}, on the one hand  the condition \eqref{eq:flat-3} is only concerned with
the single moment matrix $\M_n(\bphi)$, but on the other hand  there is no localization of the 
support of its measure. Moreover only the subsequence $\bphi^{\hat{n}}$
of $\bphi$ (and not $\bphi$) has a representing measure.
\end{rem}

\paragraph{Contribution}
We are concerned with practical sufficient rank-conditions for finite convergence 
of the Moment-SOS hierarchy. 
With $g_0(\x)=1$ for all $\x$, the degree-$n$ semidefinite relaxation $\Q_n$ of the Moment-SOS 
hierarchy associated with $\P$, reads:
\begin{equation}
\label{eq:step-n}
\Q_n:\quad\rho_n\,=\,\min_{\bphi}\,\{\,\phi(f):\: \phi(1)=1\,;\: \M_{n-d_j}(g_j\,\bphi)\succeq0\,,\:j=0,\ldots,m\,\}\,,\end{equation}
where $\bphi=(\phi_{\balpha})_{\balpha\in\N^d_{2n}}$, and
$\M_{d-d_j}(g_j\,\bphi)$ is the localizing matrix associated with $\bphi$
and the polynomial
$g_j\in\R[\x]$. ($\M_{n-d_0}(g_0\,\bphi)=\M_n(\bphi)$ is the moment matrix associated with $\bphi$.) 
With $\bphi$ (in bold) is associated the Riesz linear functional $\phi\in\R[\x]_{2n}^*$ (not in bold face) defined by:
\[p\:(=\sum_{\balpha\in\N^d_{2n}}p_{\balpha}\,\x^{\balpha})\quad\mapsto \phi(p)\,=\,\sum_{\balpha\in\N^d_{2n}} p_{\balpha}\,\phi_{\balpha}\,,\quad\forall p\in\R[\x]_{2n}\,.\]
The truncated $\K$-moment problem is concerned with conditions on $\bphi$ to guarantee
that $\bphi$ has a representing measure on $\K$, i.e., $\phi(p)=\int_\K pd\phi$ for all $p\in\R[\x]_{2n}$,
for some measure $\phi$ on $\K$; see e.g. \cite{Schmudgen,lass-book}.

Let $d_f:=\lceil\mathrm{deg}(f)/2\rceil$ and $d_j:=\lceil\mathrm{deg}(g_j)/2\rceil$, for all $j=1,\ldots,m$,
and $v:=\max\{d_1,\ldots,d_m\}$.
Then our first result provides a sufficient condition to solve the truncated $\K$-moment problem.

\begin{thm}
\label{intro-th1}
With $n\geq v$, let $\bphi^n=(\phi_{\balpha})_{\balpha\in\N^d_{2n}}$ be such that 
\begin{equation}
\phi(1)\,=\,1\,;\quad \M_n(\bphi^n)\,\succeq0\,;\quad \M_{n-d_j}(g_j\,\bphi^n)\,\succeq0\,,\quad j=1,\ldots,m\,.
\end{equation}
If $s:=\mathrm{rank}(\M_n(\bphi^n))\,\leq\,n-v+1$ 
then $\bphi^{\hat{n}}:=(\phi^n_{\balpha})_{\balpha\in\N^d_{2n-1}}$ has a representing measure supported on 
at most $r\,(\leq s)$ points of $\K$. In addition, if $r=s$ then
$\bphi^{n}$ has a representing measure supported on $s$ points of $\K$.
\end{thm}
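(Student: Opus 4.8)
The plan is to reduce the statement to the constrained flatness condition \eqref{eq:flat-2}. Concretely, once I show that the rank bound forces $\mathrm{rank}(\M_n(\bphi^n))=\mathrm{rank}(\M_{n-v}(\bphi^n))$, the hypotheses $\M_{n-d_j}(g_j\,\bphi^n)\succeq0$ are exactly those required by \eqref{eq:flat-2}, which then delivers an atomic measure supported on $\mathrm{rank}(\M_n(\bphi^n))=s$ points of $\K$ and representing $\bphi^n$. So the whole difficulty is concentrated in propagating the low rank down to level $n-v$.

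The key technical ingredient is that, for a positive semidefinite moment matrix, flatness cannot appear and then disappear as the degree increases. Write $r_k:=\mathrm{rank}(\M_k(\bphi^n))$ for $0\le k\le n$, so that $1=r_0\le r_1\le\cdots\le r_n=s$, the inequalities holding because each $\M_k$ is a principal submatrix of $\M_{k+1}$. I would first record two standard facts about $\ker\M_n(\bphi^n)$: (i) if $p\in\R[\x]$ has $\deg p\le k$ and its coefficient vector lies in $\ker\M_k(\bphi^n)$, then, extended by zeros, it lies in $\ker\M_n(\bphi^n)$, since $\M_n\succeq0$ and the quadratic form of $\M_n$ restricted to polynomials of degree $\le k$ is exactly that of $\M_k$; and (ii) the shift property: if $q$ lies in $\ker\M_n(\bphi^n)$ with $\deg q\le n-1$, then $\x_i\,q$ lies in $\ker\M_n(\bphi^n)$ for each $i$. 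Granting these, suppose $r_k=r_{k-1}$ for some $k\le n-1$. Then every degree-$k$ monomial satisfies a relation $\x^{\balpha}\equiv p_{\balpha}$ modulo $\ker\M_n(\bphi^n)$ with $\deg p_{\balpha}\le k-1$; shifting such a relation by a variable and reducing the resulting degree-$k$ part again by these same relations shows that every degree-$(k+1)$ monomial is congruent to a polynomial of degree $\le k-1$, whence $r_{k+1}=r_{k-1}=r_k$. Thus a single flat step propagates upward, and the flat steps form a terminal block.

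Consequently the rank sequence strictly increases up to a stabilization level and is constant thereafter: $1=r_0<r_1<\cdots<r_{t-1}=r_t=\cdots=r_n=s$. The strict part forces $s=r_{t-1}\ge r_0+(t-1)=t$, so stabilization occurs at level $t-1\le s-1$. The hypothesis $s\le n-v+1$ rewrites as $s-1\le n-v$, hence $t-1\le n-v$ and therefore $r_{n-v}=s=r_n$. This is precisely the constrained flatness condition \eqref{eq:flat-2}, which I then invoke to conclude that $\bphi^n$ has an atomic representing measure on $s$ points of $\K$; a fortiori its truncation $\bphi^{\hat n}$ is represented as well, and the number $r$ of atoms equals $s$, so the ``$r=s$'' alternative of the statement is the one that is realized.

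I expect the propagation lemma of the second paragraph to be the main obstacle, and in particular the bookkeeping of degrees at the very top level $k+1=n$, where the shift property is applied to kernel elements of degree exactly $n-1$ and one must check that every shifted and subsequently reduced relation still lives inside $\M_n(\bphi^n)$. The remaining localization --- that the $m$ conditions $\M_{n-d_j}(g_j\,\bphi^n)\succeq0$ place every atom in $\K$ rather than merely in $\R^d$ --- is already packaged inside \eqref{eq:flat-2}, so I would cite it rather than reprove it.
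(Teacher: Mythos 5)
Your reduction to the constrained flatness condition \eqref{eq:flat-2} hinges on the shift property (ii): that for a merely psd moment matrix $\M_n(\bphi)$, $q\in\ker\M_n(\bphi)$ with $\deg q\le n-1$ implies $x_iq\in\ker\M_n(\bphi)$. That property is false. The correct statement (Curto--Fialkow) keeps the \emph{product} below the top degree: if $f\in\ker\M_n(\bphi)$ and $\deg(fg)\le n-1$, then $fg\in\ker\M_n(\bphi)$. At the top degree the kernel of a psd truncated moment matrix need not be closed under multiplication --- being ``recursively generated'' is exactly the extra requirement beyond psd-ness. Concretely, take $d=1$, $n=2$, $\bphi=(1,0,0,0,1)$: then $\M_2(\bphi)=\mathrm{diag}(1,0,1)\succeq0$, the polynomial $x$ lies in the kernel and has degree $n-1=1$, yet $x\cdot x=x^2$ does not. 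The same example kills your propagation lemma: here $r_0=r_1=1$ but $r_2=2$, so flatness does appear and then disappear. With the correct shift property your induction only works for $k\le n-2$; the last step, from $r_{n-1}$ to $r_n$, is precisely the one that can jump, and it is exactly the step you flagged as the ``main obstacle.''

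The failure is structural, not a repairable bookkeeping issue: your rank-propagation argument uses only $\M_n(\bphi^n)\succeq0$ and never the localizing conditions, so if it were correct it would show that \emph{every} psd sequence with $\mathrm{rank}\,\M_n(\bphi^n)\le n-v+1$ is flat, hence (by Curto--Fialkow) that the whole sequence $\bphi^n$ always has a representing measure with exactly $s$ atoms. The theorem is stated the way it is precisely because this is false: in the case $r<s$ only the truncation $\bphi^{\hat{n}}$ is represented, and $\bphi^n$ may have no representing measure at all (again $(1,0,0,0,1)$: any representing measure would have to be $\delta_0$, contradicting $\phi_4=1$). The paper's proof therefore takes a different route. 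It first applies Blekherman's theorem via homogenization (Corollary \ref{cor-blek} and Corollary \ref{precision}, after noting $s\le n-v+1$ implies $s\le 3n-3$, resp.\ $s\le 6$) to obtain an atomic measure on $\R^d$ representing only $\bphi^{\hat{n}}$, supported on $r\le s$ points; it then proves the localization directly: for each $j$, if some atom had $g_j(\x(i))<0$, the SOS polynomial $p=\prod_{i\in\Gamma}\Vert\x-\x(i)\Vert^2$, with $\Gamma=\{i:g_j(\x(i))\ge0\}$, has degree small enough --- this is where the rank bound $s\le n-v+1$ is really used --- that $\phi^n(p\,g_j)$ coincides with $\int p\,g_j\,d\phi$ and is nonnegative by \eqref{eq:pos}, which yields a contradiction. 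The localization on $\K$ is not ``packaged'' inside any citable flatness statement; it is the heart of the proof and has to be argued by hand.
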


Our second result investigates the impact of Theorem \ref{intro-th1} on the Moment-SOS hierarchy.
\begin{thm}
\label{intro-th-main}
Let $\P$ be as in \eqref{def-pb-1}.
For every $n$ with $n\geq v$ and $2n-1\geq \mathrm{deg}(f)$, let 
$\bphi^n=(\phi^n_{\balpha})_{\balpha\in\N^d_{2n}}$ 
be  an optimal solution of the semidefinite relaxation $\Q_n$:

(i) If ${\rm rank}(\M_n(\bphi^n))\leq n-v+1$ then
$\phi^n(f)=f^*$, that is, $\rho_n=f^*$ and the relaxation $\Q_n$ is exact.

(ii) Next assume that $\mathrm{deg}(f)\leq 2$ and $\mathrm{deg}(g_j\leq 2)$ for all $j$,
so that $v=1$ (and $\P$ is a QCQP). Then one may recover  a probability measure
supported on global minimizers of $\P$ (e.g., via the extraction procedure of \cite{extract}).
\end{thm}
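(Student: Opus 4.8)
The plan is to prove (i) by establishing $\rho_n\geq f^*$ (the reverse inequality being automatic) and then to deduce (ii) by inspecting the support of the representing measure produced along the way. First I would record the two standard facts about $\rho_n$: optimality of $\bphi^n$ gives $\rho_n=\phi^n(f)$, while the fact that $\Q_n$ is a relaxation of $\P$ gives $\rho_n\leq f^*$ (indeed, for each $\x\in\K$ the truncated moment sequence of the Dirac measure $\delta_{\x}$ is feasible for $\Q_n$ with objective value $f(\x)$, so $\rho_n\leq\inf_{\x\in\K}f(\x)=f^*$). It therefore suffices to establish $\rho_n\geq f^*$.

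For this I would invoke Theorem \ref{intro-th1}. The hypothesis $\mathrm{rank}(\M_n(\bphi^n))\leq n-v+1$ is exactly its rank condition, so $\bphi^{\hat n}=(\phi^n_{\balpha})_{\balpha\in\N^d_{2n-1}}$ has a representing measure $\mu$ supported on at most $r$ points of $\K$. Since $\phi(1)=1$, the measure $\mu$ is a probability measure; and since the standing hypothesis $2n-1\geq\deg(f)$ gives $f\in\R[\x]_{2n-1}$, the functional $\phi^n$ agrees with integration against $\mu$ on $\R[\x]_{2n-1}$, whence $\phi^n(f)=\int f\,d\mu$. As $\supp(\mu)\subseteq\K$ and $f\geq f^*$ on $\K$, I obtain $\phi^n(f)=\int f\,d\mu\geq f^*\int d\mu=f^*$. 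Combined with $\rho_n=\phi^n(f)\leq f^*$ this forces $\rho_n=f^*=\phi^n(f)$, proving (i).

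For (ii), the QCQP assumption forces $v=1$, so (i) applies and $\int f\,d\mu=f^*$. Rewriting this as $\int(f-f^*)\,d\mu=0$, where the integrand is non-negative on $\supp(\mu)\subseteq\K$, I conclude that $f=f^*$ holds $\mu$-almost everywhere; hence every atom of $\mu$ lies in the set of global minimizers $\{\x\in\K:f(\x)=f^*\}$. Thus $\mu$ is a finitely atomic probability measure carried by the minimizers, and recovering its atoms reduces to running the extraction procedure of \cite{extract} on a flat moment matrix. Because $\mu$ has at most $r\leq s\leq n$ atoms, such a flat matrix is available within $\bphi^n$: when $r=s$, Theorem \ref{intro-th1} yields a representing measure for the full sequence $\bphi^n$, so $\M_n(\bphi^n)$ itself is flat; when $r<s$ one instead works with a moment matrix $\M_t(\bphi^n)$ of order $t\leq n-1$, whose entries are genuine moments of $\mu$ and whose rank has already stabilized at the number of atoms.

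The one step requiring care is that Theorem \ref{intro-th1} certifies a representing measure only for the truncation $\bphi^{\hat n}$, and in general not for the full degree-$2n$ sequence $\bphi^n$ unless $r=s$. This is harmless for (i), since $\deg(f)\leq 2n-1$ lets us integrate $f$ against $\mu$ directly. For the extraction in (ii), however, one must apply the flatness/rank criterion only to moment matrices whose entries are bona fide moments of $\mu$ — those of order at most $n-1$ in the generic case — rather than to $\M_n(\bphi^n)$ itself; isolating such a flat block, or equivalently reducing to the case $r=s$, is the place where the distinction between $\bphi^n$ and $\bphi^{\hat n}$ must be treated explicitly, and I expect this bookkeeping to be the main (though routine) obstacle.
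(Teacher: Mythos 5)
Your proof of part (i) is correct and coincides with the paper's own argument: feasibility of Dirac measures gives $\rho_n\leq f^*$, Theorem \ref{intro-th1} gives an atomic representing measure $\mu$ on $\K$ for $\bphi^{\hat{n}}$, and the hypothesis $\mathrm{deg}(f)\leq 2n-1$ lets one write $\rho_n=\phi^n(f)=\int f\,d\mu\geq f^*$.

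For part (ii), however, you take a genuinely different route from the paper. The paper does \emph{not} reuse the measure $\mu$ from part (i); it runs a rank-descent on the truncations $\bmu^{k}:=(\phi^n_{\balpha})_{\vert\balpha\vert\leq 2k}$: at each level either $\mathrm{rank}(\M_{k-1}(\bmu^{k-1}))=\mathrm{rank}(\M_{k}(\bmu^{k}))$, in which case the \emph{constrained} flat extension theorem of Curto--Fialkow \cite{Curto,Fialkow} (using the psd localizing matrices inherited from $\Q_n$, available since $v=1$) yields an atomic representing measure of the whole truncation supported on $\K$ whose atoms are minimizers and can be extracted, or the rank strictly drops; since $\mathrm{rank}(\M_n(\bphi^n))\leq n$, the invariant $\mathrm{rank}(\M_k(\bmu^k))\leq k$ propagates downward, so the descent ends either at a flat level or at order $1$, where rank one forces a Dirac at a point of $\K$. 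You instead exploit the measure $\mu$ already produced by Theorem \ref{intro-th1}: its atoms are global minimizers (from $\int (f-f^*)\,d\mu=0$), its moment matrices up to order $n-1$ are exactly those of $\bphi^n$, and your assertion that ``the rank has already stabilized'' amounts to the standard fact that an $r$-atomic measure satisfies $\mathrm{rank}(\M_t(\mu))=r$ for all $t\geq r-1$, because point evaluations at $r$ distinct points are linearly independent on $\R[\x]_{r-1}$ (interpolation by products of affine forms). Since $r<s\leq n-v+1=n$ forces $r\leq n-1$, the pair $\bigl(\M_{r-1}(\bphi^n),\M_r(\bphi^n)\bigr)$ is then a flat pair of genuine moment matrices of $\mu$ and the extraction of \cite{extract} applies (uniqueness of the representing measure of a flat truncation guarantees the extracted atoms are those of $\mu$); when $r=s$, Theorem \ref{intro-th1} makes $\M_n(\bphi^n)$ itself flat. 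Your route is shorter and needs only the unconstrained extraction step, localization and optimality of the atoms being known beforehand; its price is the interpolation fact above, which is exactly the content of your ``routine bookkeeping'' and should be stated and proved (it is standard and true). The paper's route avoids that fact entirely and is directly algorithmic on the pseudo-moment data (scan ranks top-down, then apply constrained flat extension), at the cost of a longer case analysis. Both arguments are sound.
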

\vspace{.1cm}

\begin{rem}
 It is well-known that if $\mathrm{rank}(\M_n(\bphi^n))=1$
then  $\bphi^n$ is the moment vector of the Dirac measure $\delta_{\x^*}$ for some global minimizer $\x^*\in\K$.
So it is fair to say that the rank-condition in Theorem
\ref{intro-th1}  and Theorem \ref{intro-th-main} provides an extension  of this result.

This rank condition is in the spirit of Blekherman's condition \eqref{eq:flat-3}
(i.e., with no flatness condition as in \eqref{eq:flat-1}).
While on the one hand it is more restrictive than \eqref{eq:flat-3}, on the other hand it
provides an additional localization on $\K$ of the support of representing measure. 
This localization feature is crucial for polynomial optimization 
as described in Theorem \ref{intro-th-main}.
\end{rem}
\vspace{.2cm}

In fact, in \cite[p. 72]{Blek} it is wrongly stated that \eqref{eq:flat-3} provides a stopping criterion for exactness
of  the hierarchy of SOS relaxations associated with POPs. The reason why it is incorrect 
is because again there is no localization of the support of the representing measure. On the other hand, 
in Section \ref{sec:exactness} we prove that indeed \eqref{eq:flat-3} provides a sufficient condition 
to detect whether the \emph{single} semidefinite relaxation associated with the \emph{unconstrained}
POP: $\inf \{f(\x): \x\in\R^d\}$ (where $f$ is an even degree polynomial) is exact. The proof is 
not trivial because  if $\mathrm{rank}(\M_n(\bphi))\leq 3n-3$ then only the subsequence
of moments up to degree $2n-1$ has a representing measure, say $\mu$, on $\R^d$, and $f$
is of degree $2n$ and not of degree $2n-1$; hence $\bphi(f)$ is not necessarily equal to
$\int fd\mu$.

\section{Notation, definitions and preliminary results}
\label{notation}
Let $\mathbb{R}[\x]$ denote the ring of polynomials in the variables $\x=(x_1,\ldots,x_d)$ and let
$\mathbb{R}[\x]_n$ be the vector space of polynomials of degree at most $n$
(whose dimension is $s(n):={n+d\choose n}$).
For every $n\in\N$, let  $\N^d_n:=\{\alpha\in\N^d:\vert\alpha\vert \,(=\sum_i\alpha_i)\leq n\}$,
and
let $\v_n(\x)=(\x^{\balpha})$, $\balpha\in\N^d$, be the vector of monomials of the canonical basis
$(\x^{\balpha})$ of $\mathbb{R}[\x]_{n}$.
A polynomial $f\in\mathbb{R}[\x]_n$ is written
\[\x\mapsto f(\x)\,=\,\sum_{\balpha\in\N^d_n}f_{\balpha}\,\x^{\balpha}\,=\,\langle\f,\v_n(\x)\rangle\,,\]
where $\f=(f_{\balpha})\in\mathbb{R}^{s(n)}$ is its vector of coefficients in the canonical basis of monomials $(\x^{\balpha})_{\balpha\in\N^d}$.
For real symmetric matrices, let $\langle \B,\C\rangle:={\rm trace}\,(\B\C)$ while the notation $\B\succeq0$
stands for $\B$ is positive semidefinite (psd) whereas
$\B\succ0$ stands for $\B$ is  positive definite (pd).
\subsection*{The Riesz linear functional}
Given a sequence $\bphi=(\phi_\alpha)_{\balpha\in\N^d}$ (with $\bphi$ in bold), the Riesz functional is the linear mapping
$\phi:\mathbb{R}[\x]\to\mathbb{R}$ (with $\phi$ not in bold) defined by:
\begin{equation}
\label{Riesz}
f\:(=\sum_{\balpha} f_{\balpha}\,\x^{\balpha})\quad \mapsto \phi(f)\,=\,\sum_{\balpha\in\N^d}f_{\balpha}\,\phi_{\balpha}\,,
\quad\forall f\in\R[\x]\,.
\end{equation}
\subsection*{Moment matrix}
The degree-$n$ {\it moment} matrix associated with a sequence
$\bphi=(\phi_{\balpha})$, $\balpha\in\N^d$, is the real symmetric matrix $\M_n(\bphi)$ with rows and columns indexed by $\N^d_n$, and whose entry
$(\balpha,\bbeta)$ is just $\phi_{\balpha+\bbeta}$, for every $\balpha,\bbeta\in\N^d_n$.
Alternatively, let
$\v_n(\x)\in\mathbb{R}^{s(n)}$ be the vector $(\x^{\balpha})$, $\balpha\in\N^d_n$, and
define the real symmetric matrices $(\B^1_{\balpha})$ by
\begin{equation}
\label{balpha}
\v_n(\x)\,\v_n(\x)^T\,=\,\sum_{\balpha\in\N^d_{2n}}\B^1_{\balpha}\,\x^{\balpha},\qquad\forall\x\in\mathbb{R}^d.\end{equation}
Then $\M_n(\bphi)=\sum_{\balpha\in\N^d_{2n}}\phi_{\balpha}\,\B^1_{\balpha}$.
If $\bphi$ has a representing measure $\phi$ then
$\M_n(\bphi)\succeq0$ because $\langle\f,\M_n(\bphi)\f\rangle=\int f^2d\phi\geq0$, for all $f\in\mathbb{R}[\x]_n$; in this case $\phi_{\balpha}$ is the $\balpha$-moment of $\phi$.

A measure whose all moments are finite, is {\it moment determinate} if there is no other measure with same moments.
The support of a Borel measure $\phi$ on $\mathbb{R}^d$ (denoted $\supmu$) is the smallest closed set $\om$ such that $\phi(\mathbb{R}^d\setminus\om)=0$.

In the TCS community, a vector $\bphi=(\phi_{\balpha})$ whose moment matrix is psd, is called a vector 
of {\em pseudo-moments} (and {\em moments} if $\bphi$ has a representing measure).
\vspace{.2cm}

\subsection*{Localizing matrix} With $\bphi$ as above and $g\in\mathbb{R}[\x]$ (with $g(\x)=\sum_{\bgamma} g_{\bgamma}\x^{\bgamma}$), the degree-$n$ {\it localizing} matrix associated with $\bphi$
and $g$ is the real symmetric matrix $\M_n(g\,\bphi)$ with rows and columns indexed by $\N^d_n$, and whose entry $(\balpha,\bbeta)$ is just $\sum_{\bgamma}g_{\bgamma}\phi_{\balpha+\bbeta+\bgamma}$, for every $\balpha,\bbeta\in\N^d_n$.
Alternatively, let $\B^g_{\balpha}$ be the real symmetric matrices defined by:
\begin{equation}
\label{calpha}
g(\x)\,\v_n(\x)\,\v_n(\x)^T\,=\,\sum_{\balpha\in\N^d_{2n+{\rm deg}\,g}}\B^g_{\balpha}\,\x^{\balpha},\qquad\forall\x\in\mathbb{R}^d.\end{equation}
Then $\M_n(g\,\bphi)=\sum_{\balpha\in\N^d_{2n+{\rm deg}g}}\phi_{\balpha}\,\B^g_{\balpha}$.
Importantly,
\begin{equation}
\label{eq:pos}
\M_n(g_j\,\bphi)\,\succeq\,0\quad\Leftrightarrow\quad\bphi(f^2\,g_j)\,\geq\,0\,,\quad\forall f\in\R[\x]_n\,.\end{equation}
Next, if $\bphi$ has a representing measure $\phi$ whose support is
contained in the set $\{\x:g(\x)\geq0\}$ then
$\M_n(g\,\bphi)\succeq0$ for all $n$ because $\langle\f,\M_n(g\,\bphi)\f\rangle=\int f^2\,gd\phi\geq0$,
for all $f\in\mathbb{R}[\x]_n$.

\subsection*{Homogenization}
Let $\bphi^n=(\phi_{\balpha})_{\balpha\in\N^d_{2n}}$. Its homogenization is the vector
$\tilde{\bphi}^n=(\tilde{\phi}^n_{i,\balpha})_{i+\balpha=2n}$ defined by:
\[\tilde{\phi}^n_{2n-\vert\balpha\vert,\balpha}=\phi^n_{\balpha},\quad \balpha\in\N^d_{2n},\]
and the homogenization $\widetilde{\M}_n(\tilde{\bphi}^n)$ of the moment matrix $\M_n(\bphi^n)$ 
is defined by:
\[\widetilde{\M}_n(\tilde{\bphi}^n)((i,\balpha),(j,\bbeta))\,=\,\M_n(\bphi^n)(\balpha,\bbeta)\,=\,
\tilde{\phi}^n_{i+j,\balpha+\bbeta},\quad i+\vert\balpha\vert=j+\vert\bbeta\vert=n.\]

For instance in dimension $d=2$, and for $n=1$:
\[\M_1(\bphi^n)=\left[\begin{array}{ccc}
\phi^n_{00} &\phi^n_{10} &\phi^n_{01}\\
\phi^n_{10} &\phi^n_{20} &\phi^n_{11}\\
\phi^n_{01} &\phi^n_{11} &\phi^n_{02}\end{array}\right]
\,=\,\widetilde{\M}_1(\tilde{\bphi}^n)=\left[\begin{array}{ccc}
\tilde{\phi}^n_{2,00} &\tilde{\phi}^n_{1,10} &\tilde{\phi}^n_{1,01}\\
\tilde{\phi}^n_{1,10} &\tilde{\phi}^n_{0,20} &\tilde{\phi}^n_{0,11}\\
\tilde{\phi}^n_{1,01} &\tilde{\phi}^n_{0,11} &\tilde{\phi}^n_{0,02}\end{array}\right]\]

\begin{thm}(Blekherman \cite[Theorem 2.3]{Blek})
\label{th-Blek}
Let $\bphi^n$ and $\tilde{\bphi}^n$ be such that $\widetilde{\M}_n(\tilde{\bphi}^n)\succeq0$. 
If $s:={\rm rank}\,\widetilde{\M}_n(\tilde{\bphi}^n)\leq 3n-3$ when $n\geq 3$
(or $s\leq 6$ when $n=2$) then 
$\tilde{\bphi}^n$ has an atomic  representing measure on $\R^{d+1}$ supported on $s$ atoms.
\end{thm}
For every $n$ and $\bphi^n=(\phi^n_{\balpha})_{\balpha\in\N^d_{2n}}$, define $\bphi^{\hat{n}}:=(\phi^n_{\balpha})_{\balpha\in\N^d_{2n-1}}$. That is, $\bphi^{\hat{n}}$ is the restriction
of the vector $\bphi^n$ to its ``moments" up to degree $2n-1$.
\begin{cor}(Fialkow \cite{Fialkow})
\label{cor-blek}
Let $\bphi^n$ and $\tilde{\bphi}^n$ be such that $\widetilde{\M}_n(\tilde{\bphi}^n)\succeq0$
and $s:={\rm rank}\,\widetilde{\M}_n(\tilde{\bphi}^n)\leq 3n-3$ (or $\leq 6$ if $n=2$). Then
$\bphi^{\hat{n}}\:(=(\phi_{\balpha})_{\vert\balpha\vert\leq 2n-1})$ has an atomic  
representing measure on $\R^d$.
\end{cor}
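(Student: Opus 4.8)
The plan is to derive the representing measure on $\R^d$ by \emph{dehomogenizing} the atomic measure on $\R^{d+1}$ furnished by Theorem \ref{th-Blek}. First I would apply that theorem: since $\widetilde{\M}_n(\tilde{\bphi}^n)\succeq0$ and $s={\rm rank}\,\widetilde{\M}_n(\tilde{\bphi}^n)$ meets the stated bound, it produces an atomic measure $\mu=\sum_{k=1}^s\lambda_k\,\delta_{\xi^{(k)}}$ on $\R^{d+1}$ with $\lambda_k>0$ and atoms $\xi^{(k)}=(\xi^{(k)}_0,\xi^{(k)}_1,\ldots,\xi^{(k)}_d)$ whose degree-$2n$ moments reproduce $\tilde{\bphi}^n$. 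Writing $\bar\xi^{(k)}:=(\xi^{(k)}_1,\ldots,\xi^{(k)}_d)\in\R^d$ for the non-homogenizing coordinates, this means
\[\tilde{\phi}^n_{i,\balpha}\,=\,\sum_{k=1}^s\lambda_k\,(\xi^{(k)}_0)^{i}\,(\bar\xi^{(k)})^{\balpha}\,,\qquad i+\vert\balpha\vert=2n\,,\]
and hence, recalling that $\phi^n_{\balpha}=\tilde{\phi}^n_{2n-\vert\balpha\vert,\balpha}$,
\[\phi^n_{\balpha}\,=\,\sum_{k=1}^s\lambda_k\,(\xi^{(k)}_0)^{2n-\vert\balpha\vert}\,(\bar\xi^{(k)})^{\balpha}\,,\qquad\balpha\in\N^d_{2n}\,.\]

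The central step is to split the atoms into the \emph{affine} ones (with $\xi^{(k)}_0\neq0$) and those \emph{at infinity} (with $\xi^{(k)}_0=0$). For each affine atom I would set $z^{(k)}:=\bar\xi^{(k)}/\xi^{(k)}_0\in\R^d$ and $w_k:=\lambda_k\,(\xi^{(k)}_0)^{2n}>0$; the elementary identity $w_k\,(z^{(k)})^{\balpha}=\lambda_k\,(\xi^{(k)}_0)^{2n-\vert\balpha\vert}\,(\bar\xi^{(k)})^{\balpha}$, valid for all $\vert\balpha\vert\leq 2n$, shows that such an atom contributes to $\phi^n_{\balpha}$ exactly the mass $w_k$ placed at the point $z^{(k)}\in\R^d$. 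For an atom at infinity the factor $(\xi^{(k)}_0)^{2n-\vert\balpha\vert}$ vanishes as soon as $2n-\vert\balpha\vert>0$, i.e.\ whenever $\vert\balpha\vert\leq 2n-1$. Consequently the atoms at infinity contribute nothing to $\phi^n_{\balpha}$ for $\vert\balpha\vert\leq 2n-1$, and I would conclude that $\nu:=\sum_{k:\,\xi^{(k)}_0\neq0}w_k\,\delta_{z^{(k)}}$ is an atomic representing measure on $\R^d$ for $\bphi^{\hat{n}}=(\phi^n_{\balpha})_{\vert\balpha\vert\leq 2n-1}$.

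The main obstacle, and the precise reason for restricting to degree $2n-1$, is exactly the possible presence of atoms at infinity. When $\vert\balpha\vert=2n$ the factor $(\xi^{(k)}_0)^{0}=1$ no longer annihilates such an atom, so the atoms at infinity do in general contaminate the top-degree entries of $\bphi^n$; this is why $\bphi^n$ itself need not be representable on $\R^d$, consistent with the footnote following \eqref{eq:flat-3}. Beyond this point no genuine difficulty arises: the dehomogenization identity is a one-line verification, and the weights $w_k$ are automatically positive, so $\nu$ is a bona fide nonnegative atomic measure supported on at most $s$ points of $\R^d$.
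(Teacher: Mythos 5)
Your proof is correct and takes essentially the same approach as the paper: the paper's own proof (given for the refined Corollary \ref{precision}, which subsumes this statement) performs exactly the same dehomogenization, splitting the atoms of the measure furnished by Theorem \ref{th-Blek} into those with $x_0(i)\neq0$ and those at infinity, and rescaling the weights by $x_0(i)^{2n}$ to obtain the representing measure on $\R^d$ for $\bphi^{\hat{n}}$. Your closing observation---that atoms at infinity contaminate only the degree-$2n$ entries, which is why the full sequence $\bphi^n$ need not be representable---matches the paper's treatment as well.
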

See also the comment after Theorem 1.3 in \cite[p. 948]{Fialkow-2}.
We will also need  the following consequence:
\begin{cor}
\label{precision}
Let $\bphi^n$ and $\tilde{\bphi}^n$ be such that $\widetilde{\M}_n(\tilde{\bphi}^n)\succeq0$
and ${\rm rank}\,\widetilde{\M}_n(\tilde{\bphi}^n)\leq 3n-3$ (or $\leq 6$ if $n=2$). Then
$\bphi^{\hat{n}}\:(=(\phi_{\balpha})_{\vert\balpha\vert\leq 2n-1})$ 
has an atomic  representing measure on $\R^d$  supported on at most $r\leq s$ points,
and with mass $\phi^n(1)$.  If
$r=s$ then the whole sequence $\bphi^n$ has a representing measure supported on $s$ points of $\R^d$.
\end{cor}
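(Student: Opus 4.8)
The plan is to obtain the statement as a quantitative refinement of Corollary~\ref{cor-blek}, by reading off the atoms of Blekherman's representing measure and dehomogenizing them, while carefully isolating the atoms lying ``at infinity''.

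First I would apply Theorem~\ref{th-Blek}: since $\widetilde{\M}_n(\tilde{\bphi}^n)\succeq0$ and $s={\rm rank}\,\widetilde{\M}_n(\tilde{\bphi}^n)\leq 3n-3$ (resp. $\leq6$ if $n=2$), the homogenized sequence $\tilde{\bphi}^n$ admits an atomic representing measure $\sum_{k=1}^s\lambda_k\,\delta_{\tilde{\x}^{(k)}}$ on $\R^{d+1}$, with weights $\lambda_k>0$ and atoms $\tilde{\x}^{(k)}=(x_0^{(k)},\x^{(k)})\in\R^{d+1}$. Writing out the entries and using the homogenization identity $\tilde{\phi}^n_{2n-\vert\balpha\vert,\balpha}=\phi^n_{\balpha}$, I would obtain
\[\phi^n_{\balpha}\,=\,\sum_{k=1}^s\lambda_k\,(x_0^{(k)})^{2n-\vert\balpha\vert}\,(\x^{(k)})^{\balpha}\,,\qquad\balpha\in\N^d_{2n}\,.\]

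Next I would separate the atoms according to whether $x_0^{(k)}$ vanishes. Let $A:=\{k:x_0^{(k)}\neq0\}$ and $r:=\vert A\vert$, so that $r\leq s$. The key point is that whenever $\vert\balpha\vert\leq 2n-1$ the exponent $2n-\vert\balpha\vert$ is at least $1$, so every atom with $x_0^{(k)}=0$ drops out and only those in $A$ remain. For $k\in A$ I would set $\y^{(k)}:=\x^{(k)}/x_0^{(k)}\in\R^d$ and $w_k:=\lambda_k(x_0^{(k)})^{2n}$; since $\lambda_k>0$ and $2n$ is even, $w_k>0$. Using $(\x^{(k)})^{\balpha}=(x_0^{(k)})^{\vert\balpha\vert}(\y^{(k)})^{\balpha}$ and substituting gives, for every $\vert\balpha\vert\leq 2n-1$,
\[\phi^n_{\balpha}\,=\,\sum_{k\in A}w_k\,(\y^{(k)})^{\balpha}\,.\]
Hence $\mu:=\sum_{k\in A}w_k\,\delta_{\y^{(k)}}$ is a nonnegative atomic measure on $\R^d$ representing $\bphi^{\hat{n}}$ and supported on at most $r\leq s$ points (fewer only if two affine atoms are proportional and so dehomogenize to a common point). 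Evaluating at $\balpha=0$ yields $\phi^n(1)=\phi^n_{0}=\sum_{k\in A}w_k=\mu(\R^d)$, the asserted mass.

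Finally, for the case $r=s$ I would note that then $A=\{1,\ldots,s\}$, i.e. there is no atom at infinity, so the dropping-out argument becomes vacuous and the identity extends to the top degree: for $\vert\balpha\vert=2n$ one has $(x_0^{(k)})^{2n-\vert\balpha\vert}=1$ together with $(\x^{(k)})^{\balpha}=(x_0^{(k)})^{2n}(\y^{(k)})^{\balpha}$, so $\phi^n_{\balpha}=\sum_k w_k(\y^{(k)})^{\balpha}$ now holds for \emph{all} $\balpha\in\N^d_{2n}$; thus $\mu$ represents the whole sequence $\bphi^n$ and is supported on $s$ points of $\R^d$. I expect the only genuinely delicate step to be this bookkeeping of the infinity atoms: an atom with $x_0^{(k)}=0$ can influence a degree-$2n$ moment $\phi^n_{\balpha}$ (where its $x_0$-exponent falls to $0$) but never a moment of degree $\leq 2n-1$, which is precisely why $\bphi^{\hat{n}}$ is always representable whereas the full sequence $\bphi^n$ requires the extra hypothesis $r=s$.
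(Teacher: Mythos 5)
Your proof is correct and follows essentially the same route as the paper: apply Theorem~\ref{th-Blek} to the homogenized sequence, split the atoms according to whether $x_0^{(k)}$ vanishes (your set $A$ is the paper's $\Delta$), dehomogenize with weights $\lambda_k (x_0^{(k)})^{2n}$, and observe that atoms at infinity cannot contribute to moments of degree $\leq 2n-1$ but can contribute to degree-$2n$ moments, which is exactly why the full sequence $\bphi^n$ needs $r=s$. Your remark that distinct projective atoms may dehomogenize to the same affine point is a small extra precision not made explicit in the paper, but it only reinforces the ``at most $r\leq s$ points'' bound.
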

\begin{proof}
 Let $\{(x_0(1),\x(1)),\ldots,(x_0(s),\x(s))\}\subset\R^{d+1}$,  be the support of
 $\tilde{\bphi}^n$ with associated (strictly) positive weights $\lambda_1,\ldots,\lambda_s$. 
 Let $\Delta:=\{i: x_0(i)\neq0\}$. Then for every $\balpha\in\N^d_{2n}$ with $\vert\balpha\vert<2n$,
 \begin{eqnarray*}
 \tilde{\phi}^n_{2n-\vert \balpha\vert,\balpha}\,=\,\phi^n_{\balpha}&=&
 \sum_{i=1}^s\lambda_i\,x_0(i)^{2n-\vert\balpha\vert}\prod_{j=1}^d x_j(i)^{\alpha_j}\\
 &=&\sum_{i\in\Delta}\lambda_i\,x_0(i)^{2n}\,\prod_{j=1}^d (\frac{x_j(i)}{x_0(i)})^{\alpha_j}
 \,=\,\int_{\R^d} \x^{\balpha}\,d\mu\,,
  \end{eqnarray*}
  with $\mu=\sum_{i\in\Delta}\lambda_i\,x_0(i)^{2n}\delta_{(\frac{x_1(i)}{x_0(i)},\cdots,\frac{x_d(i)}{x_0(i)})}$. 
  That is, $\mu$ is a representing measure for $\bphi^{\hat{n}}$. In addition, $\mu(1)=\phi^n(1)$
  because $\tilde{\phi}^n(x_0^{2n})=\phi^n(1)=\mu(1)$.
  Next,
  for every $\balpha$ with $\vert\balpha\vert=2n$,
  \begin{eqnarray*}
 \tilde{\phi}^n_{0,\balpha}\,=\,\phi^n_{\balpha}&=&
 \sum_{i=1}^s\lambda_i\,x_0(i)^{2n-\vert\balpha\vert}\prod_{j=1}^d x_j(i)^{\alpha_j}\\
 &=&\sum_{i\in\Delta}\lambda_i\,x_0(i)^{2n}\,\prod_{j=1}^d (\frac{x_j(i)}{x_0(i)})^{\alpha_j}
 +\sum_{i\not\in\Delta}\lambda_i\,\prod_{j=1}^d x_j(i)^{\alpha_j}\,,
  \end{eqnarray*}
 and therefore, if $\vert\Delta\vert=s$ we may and will conclude that
 $\phi^n_{\balpha}=\int_{\R^d}\x^{\balpha}\,d\mu$ for all $\balpha\in\N^d_{2n}$,
 i.e., $\mu$ is a representing measure of the whole sequence $\bphi^n$.
\end{proof}

\section{Main result}

With $\K$ as in \eqref{set-K}, let $v:=\max_{1\leq j\leq m}\lceil\mathrm{deg}(g_j)/2\rceil$.
Given sequence $\bphi^n(\phi^n_{\balpha})_{\balpha\in\N^d_{2n}}$ recall the notation
$\bphi^{\hat{n}}=(\phi^n_{\balpha})_{\balpha\in\N^d_{2n-1}}$, i.e.,
$\bphi^{\hat{n}}$ is the restriction of $\bphi^n$ to moments up to degree $2n-1$.
Our first result provides a sufficient condition to solve the $\K$-moment problem.

\begin{thm}
\label{th1}
With $n\geq v$, let $\bphi^n=(\phi_{\balpha})_{\balpha\in\N^d_{2n}}$ be such that 
\begin{equation}
\phi(1)\,=\,1\,;\quad \M_n(\bphi^n)\,\succeq0\,;\quad \M_{n-d_j}(g_j\,\bphi^n)\,\succeq0\,,\quad j=1,\ldots,m\,.
\end{equation}
If $s:=\mathrm{rank}(\M_n(\bphi^n))\,\leq\,n-v+1$ 
then $\bphi^{\hat{n}}$ has a representing measure supported on 
at most $r\leq s$ points of $\K$. In addition, if $r=s$ then
$\bphi^{n}$ has a representing measure supported on $s$ points of $\K$.
\end{thm}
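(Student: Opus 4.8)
The plan is to obtain the atomic structure of the representing measure from Blekherman's theorem applied to the homogenization $\tilde\bphi^n$, and then to supply the one ingredient Blekherman's statement lacks — localization on $\K$ — by feeding the localizing inequalities $\M_{n-d_j}(g_j\bphi^n)\succeq0$ into a separation-of-atoms argument whose feasibility is dictated exactly by the bound $s\le n-v+1$.

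First I would note that, by its definition, $\widetilde{\M}_n(\tilde\bphi^n)$ is the same symmetric matrix as $\M_n(\bphi^n)$ after relabeling rows and columns; hence $\widetilde{\M}_n(\tilde\bphi^n)\succeq0$ and $\mathrm{rank}\,\widetilde{\M}_n(\tilde\bphi^n)=s$. Since $v\ge1$ the hypothesis yields $s\le n-v+1\le n$, and $n\le 3n-3$ for $n\ge3$ while $s\le3-v\le2\le6$ for $n=2$ (the degenerate case $n=v=1$ forces $s=1$ and is treated directly, the rank-$1$ moment matrix being that of a Dirac with its atom in $\K$ by $\M_0(g_j\bphi^n)=[\,\phi(g_j)\,]\ge0$). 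Thus the bound in Theorem \ref{th-Blek} holds and Corollary \ref{precision} applies: $\tilde\bphi^n$ has an atomic representing measure $\tilde\mu=\sum_{i=1}^s\lambda_i\,\delta_{(x_0(i),\x(i))}$ on $\R^{d+1}$ with all $\lambda_i>0$, and $\bphi^{\hat n}$ is represented by $\mu=\sum_{i\in\Delta}\lambda_i\,x_0(i)^{2n}\,\delta_{\y(i)}$ on $\R^d$, where $\Delta=\{i:x_0(i)\ne0\}$, $\y(i)=\x(i)/x_0(i)$, $r=\vert\Delta\vert\le s$, and $\mu(1)=\phi^n(1)=1$; moreover if $r=s$ then $\mu$ represents the full sequence $\bphi^n$.

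The crux of the argument, and the step I expect to be the main obstacle, is to show that every finite atom $\y(i)$ ($i\in\Delta$) lies in $\K$. For $f\in\R[\x]_{n-d_j}$ let $\tilde f$ and $\tilde g_j$ denote the homogenizations of $f$ and $g_j$ to degrees $n-d_j$ and $2d_j$; multiplicativity of homogenization gives $\widetilde{f^2g_j}=\tilde f^{2}\,\tilde g_j$, a form of degree $2n$, so that by \eqref{eq:pos} and the representation of $\tilde\bphi^n$ by $\tilde\mu$,
\[
0\,\le\,\phi(f^2g_j)\,=\,\sum_{i=1}^s\lambda_i\,\tilde f(x_0(i),\x(i))^2\,\tilde g_j(x_0(i),\x(i)).
\]
It is essential to run this in the homogeneous picture on $\R^{d+1}$: there $\tilde f$ sweeps out all forms of the full degree $n-d_j$, whereas evaluating against $\mu$ on $\R^d$ would cost one degree whenever $\mathrm{deg}(g_j)=2d_j$, since then $f^2g_j$ reaches degree $2n$ and escapes the moments that $\mu$ represents.

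It remains to isolate one atom. I would pass to the distinct projective points $Q_1,\ldots,Q_t\in\mathbb{P}^d(\R)$ underlying the $s$ atoms, noting that atoms that are scalar multiples of one another carry the same sign of $\tilde g_j$ because $\tilde g_j$ is homogeneous of even degree $2d_j$; thus $t\le s$. Any $Q_k$ can be separated from the other $t-1$ points by a product of $t-1$ linear forms, giving a homogeneous form of degree $t-1$ that I pad up to degree $n-d_j$ by a power of a linear form nonvanishing at $Q_k$; this is feasible precisely because $t-1\le s-1\le n-v\le n-d_j$. Substituting the resulting $\tilde f$ into the displayed inequality annihilates every term except those over $Q_k$ and forces $\tilde g_j(Q_k)\ge0$; for a finite atom $Q_k=[\,1:\y(i)\,]$ this reads $x_0(i)^{2d_j}g_j(\y(i))\ge0$, i.e. $g_j(\y(i))\ge0$. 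Letting $j$ vary gives $\y(i)\in\K$, hence $\mathrm{supp}(\mu)\subseteq\K$ and the first assertion; and when $r=s$ all atoms are finite, so the very same computation places the representing measure of the full sequence $\bphi^n$ on $s$ points of $\K$.
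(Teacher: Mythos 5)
Your proof is correct, and it follows the same two-step skeleton as the paper -- Blekherman/Fialkow (Theorem \ref{th-Blek}, Corollaries \ref{cor-blek} and \ref{precision}) for the atomic structure, then the localizing conditions $\M_{n-d_j}(g_j\bphi^n)\succeq0$ plus the rank bound to force the atoms into $\K$ -- but you execute the crucial localization step by a genuinely different route. The paper dehomogenizes immediately and argues in $\R^d$, which forces a case split: when $r=s$ the measure $\phi$ represents all of $\bphi^n$ and one can test against SOS polynomials $q$ of degree $2(n-d_j)$, but when $r<s$ only the moments up to degree $2n-1$ are represented, so one degree is lost and the test polynomials must live in $\R[\x]_{2(n-1-d_j)}$; in each case the paper isolates the ``bad'' atoms by a contradiction argument with the explicit SOS polynomial $p=\prod_{i\in\Gamma}\Vert\x-\x(i)\Vert^2$. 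You instead stay in the homogeneous picture on $\R^{d+1}$, where $\tilde\mu$ represents the \emph{entire} degree-$2n$ homogeneous sequence $\tilde\bphi^n$, so the pairing $\phi^n(f^2g_j)=\int\tilde f^2\tilde g_j\,d\tilde\mu$ holds at full degree with no loss; this eliminates the case split entirely, lets you treat finite atoms and atoms at infinity uniformly, and replaces the contradiction argument by a direct separation of each projective atom with products of $t-1\le n-d_j$ linear forms (the even degree $2d_j$ of $\tilde g_j$ ensuring sign-consistency along each projective class). Your version buys uniformity and cleaner degree bookkeeping -- and you even handle the degenerate case $n=v=1$ with its localization $\M_0(g_j\bphi^n)=[\phi(g_j)]\ge0$, which the paper's remark after Theorem \ref{th1} only states without the $\K$-membership; the paper's version buys a more elementary, purely affine argument whose case split also makes transparent exactly why the conclusion weakens from $\bphi^n$ to $\bphi^{\hat n}$ when $r<s$. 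The one point to keep explicit if you polish this up: the identity $\mathrm{hom}_{2n}(f^2g_j)=\tilde f^2\tilde g_j$ and the fact that every degree-$(n-d_j)$ form in $(x_0,\x)$ is the homogenization of some $f\in\R[\x]_{n-d_j}$ are both needed so that \eqref{eq:pos} really does sweep out all forms $\tilde f$; you state both, and both are correct.
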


A detailed proof is postponed to \S \ref{appendix-1}. The case $n=1$ is only meaningful when $v=1$. In this case
$s\leq 1$ implies that the whole sequence $\phi^n$ has a representing measure,
the Dirac at some point $\x\in\R^d$.

We next investigate the consequence of Theorem \ref{th1} for polynomial optimization.
With $f$ as in \eqref{def-pb-1}, define 
$d_f:=\lceil\mathrm{deg}(f)/2\rceil$ and  recall that
$d_j:=\lceil\mathrm{deg}(g_j)/2\rceil$, for all $j=1,\ldots,m$,
and $v:=\max_{j\leq m} d_j$. \\

\begin{thm}
\label{th-main}
Let $\P$ be as in \eqref{def-pb-1}.
For every $n$ with $n\geq v$ and $2n-1\geq \mathrm{deg}(f)$, let 
$\bphi^n=(\phi^n_{\balpha})_{\balpha\in\N^d_{2n}}$ 
be  an optimal solution of the semidefinite relaxation $\Q_n$:

(i) If $s={\rm rank}(\M_n(\bphi^n))\leq n-v+1$ then
$\phi^n(f)=f^*$, that is, $\rho_n=f^*$ and the relaxation $\Q_n$ is exact. Moreover, $\bphi^{\hat{n}}$
has a representing measure supported on at most $s$ global minimizers of $\P$.

(ii) Next assume that $\mathrm{deg}(f)\leq 2$ and $\mathrm{deg}(g_j\leq 2)$ for all $j$,
so that $v=1$ (and $\P$ is a QCQP). Then one may exhibit a probability measure
supported on global minimizers of $\P$.
\end{thm}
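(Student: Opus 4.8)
The plan is to deduce both parts from Theorem~\ref{th1} combined with the elementary fact that $\Q_n$ is a relaxation of $\P$, so that $\rho_n\le f^*$. I first record this inequality: for any global minimizer $\x^*\in\K$ the moment vector of $\delta_{\x^*}$, i.e.\ $\phi_{\balpha}=(\x^*)^{\balpha}$, is feasible for $\Q_n$, since $\phi(1)=1$, $\M_n(\bphi)=\v_n(\x^*)\v_n(\x^*)^T\succeq0$, and $\M_{n-d_j}(g_j\bphi)=g_j(\x^*)\,\v_{n-d_j}(\x^*)\v_{n-d_j}(\x^*)^T\succeq0$ because $g_j(\x^*)\ge0$; its value is $f(\x^*)=f^*$, whence $\rho_n\le f^*$.

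For part (i), feasibility of the optimal $\bphi^n$ means it satisfies the hypotheses of Theorem~\ref{th1}, so the bound $s\le n-v+1$ provides a measure $\mu=\sum_{i=1}^r\lambda_i\,\delta_{\x(i)}$ supported on $r\le s$ points $\x(i)\in\K$ that represents $\bphi^{\hat n}$; it is a probability measure because $\mu(\K)=\phi^n(1)=1$. The crucial step is the degree bookkeeping: since $2n-1\ge\mathrm{deg}(f)$ we have $f\in\R[\x]_{2n-1}$, so $\phi^n(f)$ involves only the entries of $\bphi^{\hat n}$, giving $\rho_n=\phi^n(f)=\int_\K f\,d\mu=\sum_{i=1}^r\lambda_i\,f(\x(i))\ge f^*$ as each $\x(i)\in\K$ and $\sum_i\lambda_i=1$. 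Together with $\rho_n\le f^*$ this yields $\rho_n=f^*$, i.e.\ $\Q_n$ is exact. Moreover $\sum_i\lambda_i f(\x(i))=f^*$ with $f(\x(i))\ge f^*$ and $\lambda_i>0$ forces $f(\x(i))=f^*$ for every $i$, so $\mu$ is supported on at most $r\le s$ global minimizers.

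For part (ii) the desired probability measure on global minimizers already exists by (i); what remains is to \emph{exhibit} it. With $v=1$ the rank bound reads $s\le n$, hence $\mu$ has $r\le n$ atoms, and its moments up to degree $2n-1$ are the known entries of $\bphi^{\hat n}$. I would then run the flat-extension extraction of \cite{extract} on a genuine moment matrix of $\mu$: when $r\le n-1$ the matrix $\M_{n-1}(\mu)$ is assembled from these known moments and is flat, because the affine Hilbert function of $r\le n-1$ points already stabilizes at $r$ by degree $r-1\le n-2$, so $\mathrm{rank}\,\M_{n-1}(\mu)=\mathrm{rank}\,\M_{n-2}(\mu)=r$; in the borderline case $r=s=n$ the second part of Theorem~\ref{th1} yields a representing measure for the \emph{whole} sequence $\bphi^n$, so moments up to degree $2n$ are available and $\M_n(\mu)$ is flat. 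In either case \cite{extract} returns the atoms $\x(i)$ and weights $\lambda_i$.

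The step I expect to be the main obstacle is precisely the degree bookkeeping in part (i): Theorem~\ref{th1} guarantees a representing measure only for the truncation $\bphi^{\hat n}$ of degree $2n-1$, and in general not for the full $\bphi^n$, so one must make sure the objective is captured by that truncation --- this is exactly the role of the hypothesis $2n-1\ge\mathrm{deg}(f)$, which is what lets $\phi^n(f)$ coincide with $\int f\,d\mu$ even when the top-degree moments of $\bphi^n$ are not backed by any measure. A secondary subtlety in (ii) is ensuring the matrix fed to the extraction routine is both flat and built only from available moments, which is what forces the borderline case $r=s=n$ to be treated separately.
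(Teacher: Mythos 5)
Your argument for part (i) is essentially the paper's own: apply Theorem \ref{th1} to the optimal $\bphi^n$, use $2n-1\geq\mathrm{deg}(f)$ so that $\phi^n(f)=\phi^{\hat n}(f)=\int_{\K} f\,d\mu$, and sandwich $f^*\geq\rho_n=\sum_i\lambda_i f(\x(i))\geq f^*$ to conclude exactness and that every atom is a global minimizer. For part (ii), however, you take a genuinely different and correct route. The paper runs an iterative rank descent on the truncations $\bmu^{k}:=(\phi^n_{\balpha})_{\vert\balpha\vert\leq 2k}$: at each level, either the rank stabilizes between two consecutive moment matrices, in which case the Curto--Fialkow flat extension theorem (using $v=1$ and the localizing constraints) gives an atomic representing measure on $\K$ whose atoms are extracted via \cite{extract}, or the rank strictly drops, the hypotheses of Theorem \ref{th1} still hold one level down (here is where $\mathrm{deg}(f)\leq 2$ and $v=1$ are used to keep $\mu^{k}(f)=f^*$), and the descent continues until flatness occurs or rank one is reached, i.e., a Dirac. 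You instead work directly with the measure $\mu$ produced in part (i): since $\mu$ has $r\leq n$ atoms, the rank of $\M_k(\mu)$ equals the Hilbert function of its support, which stabilizes at $r$ from degree $r-1$ on (Lagrange-type interpolation with products of affine forms), so for $r\leq n-1$ the matrix $\M_{n-1}(\mu)$ is flat over $\M_{n-2}(\mu)$ and coincides with the observable $\M_{n-1}(\bmu^{n-1})$ because all its entries have degree at most $2n-2\leq 2n-1$; in the borderline case $r=s=n$ the second assertion of Theorem \ref{th1} makes $\M_n(\bphi^n)$ itself a flat genuine moment matrix. Both proofs are sound. Yours is shorter and pinpoints in advance the level ($n-1$ or $n$) at which flatness must appear, at the price of invoking the interpolation/Hilbert-function fact, which the paper never needs; the paper's descent uses only Theorem \ref{th1} and flat extension, and it mirrors the practical procedure of scanning successive truncations for a flatness certificate, but it locates the flat level only implicitly. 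One small point to make explicit in your write-up: the case distinction on $r$ is not directly observable from the data, so to ``exhibit'' the measure one should say that at least one of the two checkable flatness conditions (at level $n-1$ or at level $n$) must hold, and extraction is run on whichever one does.
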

A detailed proof is postponed to \S \ref{appendix-2}. 
So at an optimal solution $\bphi^n$ of $\Q_n$,
Theorem \ref{th-main} provide  a sufficient rank condition on 
$\M_n(\bphi^n)$ to ensure that the semidefinite relaxation $\Q_n$ is exact. In addition,
for QCQPs one may extract global minimizers by
looking at moment matrices (submatrices of the moment matrix $\M_n(\phi^n)$),
 of degree lower than $n$.
\vspace{.2cm}

\begin{rem}
\label{rem-blek}
The rank condition in Theorem \ref{th1} and 
Theorem \ref{th-main} more severe than in Theorem \ref{th-Blek}.
However this is quite natural as the rank conditions must also guarantee an important additional
feature of a representing measure of $\bphi^n$, namely its support
should be contained in $\K$, whereas in Corollary \ref{cor-blek},
$\bphi^{\hat{n}}$ being  in now way related to any set $\K\subset\R^d$,
one cannot expect any localization property of  its representing measure whenever
the latter exists.
\end{rem}

\section{The unconstrained case}
\label{sec:exactness}
In \cite[p. 72]{Blek} the author claims:
 \emph{``Theorem 2.1 also leads to an interesting stopping criterion for sum of squares
relaxations. Sum of squares methods lead to a hierarchy of relaxations indexed by
degree."} and  later still in p. 72:

\noindent
``{\bf Stopping criterion for  SOS relaxations.}\\
\emph{Suppose that the sum of squares relaxation truncated in degree $2d$ with $d\geq3$,
returns an optimal linear functional with  moment matrix of rank at most $3d-3$. Then
the relaxation is exact."}  

The above statement is incorrect because for constrained POPs in \eqref{def-pb-1}, 
even if in an optimal solution $\bphi^*$ of the relaxation $\Q_n$, 
$\mathrm{rank}(\M_n(\bphi^*))=3n-3$ (in \cite{Blek}
$d$ is the degree whereas for us $n$ is the degree)  
then  a representing measure for $\bphi$ is \emph{not} garanteed to be supported on $\K$.
Precisely, Theorem \ref{th1} provides more restrictive rank conditions to ensure that the support of $\bphi^*$ is indeed in $\K$; see Remark \ref{rem-blek}.

However, we next show that indeed Blekherman's result is useful for 
\emph{unconstrained} polynomial optimization, that is, so solve:
\begin{equation}
\label{def:uncons}
\P:\quad f^*\,=\,\inf_{\x\in\R^d} f(\x)
\end{equation}
where $f\in\R[\x]_{2n}$ with $n\geq2$ (the case $n=1$ being trivial). 
With $\P$ is associated the \emph{single} semidefinite relaxation:
\begin{equation}
 \label{eq:uncons-primal}
 \rho\,=\,\inf_{\bphi\in\R^{s(n)}} \{\,\phi(f):\: \phi(1)\,=\,1\,;\: \M_n(\bphi)\,\succeq\,0\,\}\,.
\end{equation}
Indeed there is \emph{no} hierarchy to consider. Either $f-f^*$ is SOS and $\rho=f^*$, or
$f-f^*$ is not an SOS and then $\rho<f^*$ (with possibly $\rho=-\infty$). The dual of \eqref{eq:uncons-primal} reads:
\begin{equation}
 \label{eq:uncons-dual}
 \rho^*\,=\,\sup_{\lambda} \{\,\lambda:\: f-\lambda \in\Sigma[\x]_n\,\}\,,
\end{equation}
where $\Sigma[\x]_n\subset\R[\x]_{2n}$ is the convex cone of SOS polynomials of degree at most $2n$.
\begin{thm}
Consider the unconstrained POP in \eqref{def:uncons} and its associated (single) semidefinite relaxation
\eqref{eq:uncons-primal}. Let $\rho>-\infty$  and let $\bphi^*$ be an optimal solution of \eqref{eq:uncons-primal}. Suppose
that $r:=\mathrm{rank}(\M_n(\bphi^*))\leq 3n-3$ if $n\geq3$ or $r=:\mathrm{rank}(\bphi^*)\leq 6$ if $n=2$. 
Then $\rho=f^*$ and there exist some $k\,(\leq r)$ global minimizers.
\end{thm}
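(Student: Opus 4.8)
The plan is to combine the Blekherman--Fialkow extraction of a finite atomic measure from the low-degree part of an optimal $\bphi^*$ with careful bookkeeping of the degree-$2n$ moments; this last point is exactly where the gap flagged in the introduction (that $\phi^*(f)$ need not equal $\int f\,d\mu$) must be controlled. First I would dispose of the easy direction and mine the hypothesis $\rho>-\infty$. Every $\x\in\R^d$ gives a feasible point of \eqref{eq:uncons-primal}, namely the moment sequence of $\delta_{\x}$ (with moment matrix $\v_n(\x)\,\v_n(\x)^T\succeq0$ and $\phi(1)=1$), whence $\rho\le f(\x)$ for all $\x$ and so $\rho\le f^*$; it remains to show $\rho\ge f^*$. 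Let $\ell:=\sum_{\vert\balpha\vert=2n}f_{\balpha}\,\x^{\balpha}$ be the degree-$2n$ leading form of $f$. If $\ell(\u)<0$ for some $\u\ne0$, the Diracs $\delta_{t\u}$ are feasible and give $\phi(f)=f(t\u)=t^{2n}\ell(\u)+O(t^{2n-1})\to-\infty$ as $t\to+\infty$, contradicting $\rho>-\infty$; hence $\ell\ge0$ on $\R^d$.

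Next I would extract the measure and set up the crucial identity. Since $\widetilde{\M}_n(\tilde{\bphi}^*)$ is merely a reindexing of $\M_n(\bphi^*)$, the two matrices have the same rank, so the hypothesis lets me apply Corollary \ref{precision}: writing the support of $\tilde{\bphi}^*$ as $\{(x_0(i),\x(i))\}_{i=1}^s\subset\R^{d+1}$ with weights $\lambda_i>0$ and $\Delta:=\{i:x_0(i)\ne0\}$, the measure $\mu:=\sum_{i\in\Delta}\lambda_i\,x_0(i)^{2n}\,\delta_{(\x(i)/x_0(i))}$ represents $\bphi^{\hat{n}}$, has at most $r\le s$ atoms, and has mass $\phi^*(1)=1$. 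Because $\mu$ reproduces every moment of degree $<2n$, while for $\vert\balpha\vert=2n$ the atoms at infinity ($i\notin\Delta$, where $x_0(i)=0$) contribute the extra term $\lambda_i\,\x(i)^{\balpha}$, summing $f_{\balpha}$ against the moments of $\bphi^*$ yields
\[
\phi^*(f)\;=\;\int_{\R^d}f\,d\mu\;+\;\sum_{i\notin\Delta}\lambda_i\,\ell(\x(i)).
\]

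Finally I would conclude. Since $\ell\ge0$ and each $\lambda_i>0$, the second sum is nonnegative, and since $f\ge f^*$ with $\mu(\R^d)=1$ gives $\int f\,d\mu\ge f^*$, the identity forces $\rho=\phi^*(f)\ge f^*$; together with the easy direction this yields $\rho=f^*$, so $f^*$ is finite and attained. Reading the identity as a chain of equalities then forces both $\sum_{i\notin\Delta}\lambda_i\,\ell(\x(i))=0$ and $\int f\,d\mu=f^*$; the latter, as $f-f^*\ge0$ and $\mu$ is a probability measure, forces $\supp(\mu)\subset\{f=f^*\}$, so the $k:=\vert\Delta\vert\le r$ atoms of $\mu$ are global minimizers (and $k\ge1$ because $\mu(\R^d)=1$). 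The main obstacle is the identity in the middle step: the extracted $\mu$ represents only the moments up to degree $2n-1$, so $\phi^*(f)\ne\int f\,d\mu$ in general, and everything hinges on recognizing the discrepancy as the combination $\sum_{i\notin\Delta}\lambda_i\,\ell(\x(i))$ of leading-form values, whose nonnegativity is precisely what $\rho>-\infty$ buys.
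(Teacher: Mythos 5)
Your proof is correct, and it takes a genuinely different route from the paper's. The paper argues through SDP duality: Slater's condition holds for \eqref{eq:uncons-primal} (a Gaussian measure is strictly feasible), so strong duality and dual attainment yield an SOS certificate $f-\rho=\v_n(\x)^T\X^*\v_n(\x)$ with $\langle\M_n(\bphi^*),\X^*\rangle=0$; after applying Theorem \ref{th-Blek} to $\tilde{\bphi}^*$, the homogenization $\mathrm{hom}(f-\rho)$ is an SOS that integrates to zero against the atomic measure $\tilde{\phi}^*$, hence vanishes at every atom, and dehomogenizing at the atoms with $x_0(j)\neq0$ (a nonempty set, since $\int x_0^{2n}\,d\tilde{\phi}^*=\phi^*(1)=1$) produces the global minimizers, with global optimality $f\geq\rho$ coming from the SOS certificate. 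You avoid duality entirely: you get $\rho\leq f^*$ from Dirac feasibility, extract from $\rho>-\infty$ the nonnegativity of the leading form $f_{2n}$, and then control the degree-$2n$ discrepancy explicitly through the identity $\phi^*(f)=\int f\,d\mu+\sum_{i\notin\Delta}\lambda_i\,f_{2n}(\x(i))$, whose two pieces are bounded below by $f^*$ and $0$ respectively; the resulting chain of equalities localizes $\mu$ on $\{f=f^*\}$. Both proofs share the same skeleton (homogenization, Blekherman extraction, and the splitting of atoms according to $x_0(i)\neq0$ versus $x_0(i)=0$) and identify the same minimizers; a cosmetic point is that distinct atoms of $\tilde{\phi}^*$ with $x_0(i)\neq 0$ may project to the same point of $\R^d$, so your $k$ should be read as an upper bound on the number of distinct minimizers found, which is all the statement requires. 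What the paper's route buys is the certificate itself: it shows in passing that $f-\rho$ is SOS, i.e., that the dual relaxation \eqref{eq:uncons-dual} is also exact and attained, at the price of invoking Slater, strong duality and dual attainment. What your route buys is elementarity and transparency: no duality theory is needed, and your identity isolates exactly the obstruction flagged at the end of Section \ref{sec:exactness} (that $\phi^*(f)\neq\int f\,d\mu$ in general), exhibiting it as a nonnegative combination of leading-form values whose nonnegativity is precisely what $\rho>-\infty$ guarantees.
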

\begin{proof}
We prove the result when $n\geq3$ while the arguments are similar for the case $n=2$.
Let $\tilde{f}$ (or $\mathrm{hom}(f)$) be the homogenization of $f$, that is, $\tilde{f}\in\R[x_0,\x]_{2n}$ with
 \begin{equation}
 \label{homog}
 \tilde{f}(x_0,\x)\,=\,\left\{\begin{array}{rl} x_0^{2n}\,f(\x/x_0)&\mbox{if $x_0\neq0$}\\
 f_{2n}(\x)&\mbox{if $x_0=0$}\end{array}\right.\,,\end{equation}
 where $f_{2n}$ is the degree-$2n$ homogeneous part of $f$. 
 So $\tilde{f}(1,\x)=f(\x)$ and $\tilde{f}(0,\x)=f_{2n}(\x)$ for all $\x\in\R^d$.
  In particular
if a polynomial is nonnegative or SOS then so is its homogenization and the converse is true as well. 

Slater's condition holds for \eqref{eq:uncons-primal}. Indeed let $\mu$ be the 
Gaussian measure $\mathcal{N}(0,\mathbf{I})$ on $\R^d$ (with identity matrix $\mathbf{I}$ as covariance matrix). Then $\M_n(\mu)\succ0$ and $\mu(f)$ is finite.
Therefore $\rho=\rho^*$. Next, by optimality of $\bphi^*$ and the necessary KKT optimality conditions,
there exists  $\X^*\succeq0$ such that
\[f(\x)-\rho\,=\,\v_n(\x)^T\,\X^*\,\v_n(\x)\,\quad\forall\x\in\R^d\,;\quad \langle\M_n(\bphi^*),\X^*\rangle\,=\,0\,.\]
Recall the homogenization  $\tilde{\bphi}^*=(\tilde{\phi}^*_{2n-\vert\balpha\vert,\balpha})$,
$\balpha\in\N^d_{2n}$ of $\bphi$ which reads
$\tilde{\phi}^*_{2n-\vert\balpha\vert,\balpha}=\phi^*_{\balpha}$ for every $\balpha\in\N^d_{2n}$,
and observe that the homogeneous moment matrix 
$\widetilde{\M}_n(\tilde{\bphi}^*)\,(=\M_n(\bphi^*)$) of $\tilde{\bphi}^*$ satisfies
\[\w_n(x_0,\x)^T\widetilde{\M}(\tilde{\bphi}^*)\w_n(x_0,\x)\,=\,
\mathrm{hom}\left(\v_n(\x)^T\M(\bphi^*)\v_n(\x)\right)\,,\]
where $\w_n(x_0,\x)=\mathrm{hom}(\v_n(\x))$.
Next, if $r:=\mathrm{rank}(\M_n(\bphi^*))\leq3n-3$ then by Theorem \ref{th-Blek},
$\tilde{\bphi}^*$ has a representing measure $\tilde{\phi}^*$ on $\R^{d+1}$ supported on at most $r$ atoms
$\{(x_0(1),\x(1)),\ldots(x_0(r),\x(r))\}\subset\R^{d+1}$, and
\[\phi^*_{\balpha}\,=\,\int_{\R^{d+1}}x_0^{2n-\vert\balpha\vert}\x^{\balpha}\,d\tilde{\phi}^*(x_0,\x)
\,=\,\sum_{j=1}^r\lambda_j\,(x_0(j)^{2n-\vert\balpha\vert}\x(j)^{\balpha}\,,
\quad\forall\balpha\in\N^d_{2n}\,.\]
In particular, let
\[(x_0,\x)\mapsto \mathrm{hom}(f-\rho)\,=\,\tilde{f}(\x)-\rho\,x_0^{2n}\]
be the homogenization of $f-\rho$. Then 
\[\x\mapsto f(\x)-\rho \mbox{ is SOS }\Rightarrow\quad(x_0,\x)\mapsto \mathrm{hom}(f-\rho)\,=\,\tilde{f}(x_0,\x)-\rho\,x_0^{2n}\mbox{ is SOS.}\]
Moreover,
\[\int \underbrace{\tilde{f}(x_0,\x)-\rho\,x_0^{2n})}_{SOS}\,d\tilde{\phi}^*(x_0,\x)\,=\,\phi^*(f-\lambda)\,=\,\langle\X^*,\M_n(\bphi^*)\rangle\,=\,0\,,\]
and therefore $\tilde{f}(x_0(j),\x(j))-\rho\,x_0(j)^{2n}=0$ for all $j=1,\ldots,r$.
Next, let $\Gamma:=\{\,j: \:x_0(j)\neq0\,\}$, and assume that $\Gamma\neq\emptyset$.
Then invoking \eqref{homog}, one obtains
\[x_0(j)^{2n}\,(\,f(\x(j)/x_0(j))-\rho)\,=\,0\,,\quad\forall j\,\in\,\Gamma\,,\]
which implies $f(\x(j)/x_0(j))=\rho$ for all $j\in\Gamma$, and therefore for every $j\in\Gamma$,
$\x(j)/x_0(j)$ is a global minimizer of $f$ and $f^*=\rho$. It remains to prove that 
$\Gamma\neq\emptyset$. But this follows from $1=\bphi^*(1)\,=\,\int x_0^{2n}d\tilde{\phi}^*$.
\end{proof}
In the atomic support of $\tilde{\bphi}^*$, the above proof needs to 
treat separately points with $x_0(j)\neq0$ from points with $x_0(j)=0$. Indeed
if $\tilde{\bphi}^*$ has a representing measure $\tilde{\phi}^*$
on $\R^{d+1}$, only the subsequence 
$(\phi^*_{\balpha})_{\vert\balpha\vert\leq 2n-1}$ has a representing measure $\phi^*$ on $\R^d$. Then 
\[0\,=\,\int_{\R^{d+1}}\tilde{f}(x_0,\x)-\rho\,x_0^{2n})\,d\tilde{\phi}^*
(x_0,\x)\quad\not\Rightarrow 0\,=\,\int_{\R^d}(f-\rho)\,d\phi^*(\x)\,,\]
because as $\mathrm{deg}(f)=2n>2n-1$, $\bphi^*(f-\rho)\neq\int_{\R^d}(f-\rho)\,d\phi^*$ in general.

\section{Conclusion}

We have provided a rank condition on the moment matrix 
of an optimal solution of the degree-$n$ semidefinite relaxation
of the Moment Hierarchy applied to POP. When satisfied, the
corresponding semidefinite relaxation is exact, i.e.,
the Moment-SOS hierarchy has finite converge
(and for QCQPs, global minimizers can be extracted). These conditions
are in the spirit of Blekherman's  condition,
i.e., are concerned with a single moment matrix in contrast to 
Curto \& Fialkow's flat extension condition. While they are
are more restrictive, they apply to constrained POPs whereas 
Blekherman's condition only helps  for unconstrained POPs.

\section{Appendix}
\subsection{Proof of Theorem \ref{th1}}
\label{appendix-1}
\begin{proof}
Observe that if $s\leq\,n-v+1$ then $s\leq 3n-3$ if $n\geq3$ and $s\leq 6$ if $n=2$.
Then by Corollary \ref{cor-blek}, $\bphi^{\hat{n}}$ has an atomic  representing measure 
$\phi$ supported on at most $r$ points $\x(1),\ldots \x(r)\in\R^d$ with $r\leq s$. That is, there exist
$\lambda_i>0$, $i=1,\ldots,r$, such that
\[\phi\,=\,\sum_{i=1}^r\lambda_i\,\delta_{\x(i)} \quad\mbox{and}\quad\int \x^{\balpha}\,
d\phi\,=\,\phi^n_{\balpha}\,,\:\forall\balpha\in\N^d_{2n-1}\,.\]
Let $1\leq j\leq m$ be fixed arbitrary. 

\noindent
{\bf Case $r=s$.} Then by Corollary \ref{precision}, the \emph{whole} sequence $\bphi^n$ 
has a representing measure 
$\phi$ supported on $s$ points $\x(1),\ldots,\x(s)\in \R^d$.
Next, since $2(n- d_j)+\mathrm{deg}(g_j)\leq 2n$,
the localizing matrix $\M_{n-d_j}(g_j\,\bphi)$ contains only moments of degree at most
$2n$ and therefore, $\phi^n(q\,g_j)=\int q\,g_j\,d\phi$ for all $q\in\R[\x]_{2(n-d_j)}$. Next,
by \eqref{eq:pos},
\begin{equation}
\label{posi}
0\,\preceq\,\M_{n-d_j}(g_j\,\bphi^n)\,\Rightarrow \phi^n(q\,g_j)=\int q\,g_j\,d\phi\,\geq\,0\,,\end{equation}
for every SOS polynomial $q\in\R[\x]_{2(n-d_j)}$.
From this we deduce that at least one point $\x(i)$ satisfies $g_j(\x(i))\geq0$, i.e.,
the cardinality $\vert\Gamma\vert$ of the set 
$\Gamma:=\{i: g_j(\x(i))\geq0\}$ is at least $1$. Next, suppose that $\vert\Gamma\vert<s$. Then
consider the polynomial $p\in\R[\x]_{2\vert\Gamma\vert}$ defined by:
\begin{equation}
\label{poly-p}
\x\mapsto p(\x)\,:=\,\prod_{i\in\Gamma}\left(\sum_{k=1}^d(x_k-x_k(i))^2\right)\,=\,
\prod_{i\in\Gamma}\Vert \x-\x(i)\Vert^2\,.\end{equation}
Then $p(\x(i))=0$ for every $i\in\Gamma$ and $p(\x(i))\,>\,0$ for every $i\not\in\Gamma$.
 Moreover, $p$ is an SOS of degree $2\vert\Gamma\vert<2s$, and 
 as $s\leq n-v+1$,
 $2\vert\Gamma\vert\leq 2n-2v\leq 2(n-d_j)$.
 Hence  by \eqref{posi}, $0\leq\phi^n(p\,g_j)=\int p\,g_j\,d\phi$, 
and one obtains the contradiction
\[0\,\leq\,\int p\,g_j\,d\phi\\
\,=\,\sum_{i\in\Gamma}\lambda_i\,\underbrace{p(\x(i))}_{=0} \,g_j(\x(i))+\sum_{i\not\in\Gamma}\underbrace{\lambda_i\,p(\x(i))}_{>0} \,\underbrace{g_j(\x(i))}_{<0}\,.\]
Therefore $\vert\Gamma\vert=s$ which implies $g_j(\x(i))\geq0$ for all $i=1,\ldots,s$.
As $j$ was arbitrary, $\x(i)\in\K$ for all $i=1,\ldots,s$, i.e., $\phi$ is supported on $\K$.\\

\noindent
{\bf Case $r<s$.} Then by Corollary \ref{precision}, $\bphi^{\hat{n}}$ has a representing measure 
$\phi$ supported on $r$ points $\x(1),\ldots,\x(r)\in\R^d$.
Next, since $2(n-1-d_j)+\mathrm{deg}(g_j)\leq 2n-1$,
the localizing matrix $\M_{n-1-d_j}(g_j\,\bphi)$ contains only moments of degree at most
$2n-1$, and therefore $\phi^{n}(q\,g_j)=\phi^{\hat{n}}(q\,g_j)=\int q\,g_j\,d\phi$
for all $q\in\R[\x]_{2(n-1-d_j)}$.
Moreover $\M_{n-d_j}(g_j\,\bphi)\Rightarrow \M_{n-1-d_j}(g_j\,\bphi)\succeq0$,
 and therefore, by \eqref{eq:pos},
\begin{equation}
\label{posi-r}
0\,\preceq\,\M_{n-1-d_j}(g_j\,\bphi^n)\,\Rightarrow \phi^n(q\,g_j)\,=\,
\phi^{\hat{n}}(q\,g_j)\,=\,\int q\,g_j\,d\phi\,\geq\,0\,,\end{equation}
for every SOS polynomial $q\in\R[\x]_{2(n-d_j-1)}$. Again we deduce that at least one point $\x(i)$ satisfies $g_j(\x(i))\geq0$. So the cardinality of the set 
$\Gamma:=\{i: g_j(\x(i))\geq0\}$ is at least $1$. Next, suppose that $\vert\Gamma\vert<r$. 
Let $p\in\R[\x]_{2\vert\Gamma\vert}$ be the SOS polynomial defined in \eqref{poly-p}.
As $r<n-v+1$ and $\vert\Gamma\vert<r$, $\vert\Gamma\vert\leq n-v-1$,  and so
$p$ is an SOS of degree at most $2(n-d_j-1)$. Moreover, as
\[\mathrm{deg}(g_j\,p)\,\leq\,2(n-d_j-1)+\mathrm{deg}(g_j)\leq 2n-2\,,\]
$\phi^n(p\,g_j)\,=\,\phi^{\hat{n}}(p\,g_j)=\int p\,g_j\,d\phi$. By \eqref{posi-r},
one obtains the contradiction
\begin{eqnarray*}
0\,\leq\,\phi^{\hat{n}}(p\,g_j)&=&\int p\,g_j\,d\phi\\
&=&\sum_{i\in\Gamma}\lambda_i\,\underbrace{p(\x(i))}_{=0} \,g_j(\x(i))+\sum_{i\not\in\Gamma}\underbrace{\lambda_i\,p(\x(i))}_{>0} \,\underbrace{g_j(\x(i))}_{<0}\,.
\end{eqnarray*}
Therefore $\vert\Gamma\vert=r$, which implies $g_j(\x(i))\geq0$ for all $i=1,\ldots,r$,
and as $j$ was arbitrary, $\x(i)\in\K$ for all $i=1,\ldots,r$, i.e., 
$\phi$ is supported on $\K$.
\end{proof}

 \subsection{Proof of Theorem \ref{th-main}}
 \label{appendix-2}
 \begin{proof}
 (i) By Theorem \ref{th1}, $\bphi^{\hat{n}}$ has an atomic representing measure 
$\phi$ supported on at most $r\,(\leq s)$ points $\x(1),\ldots,\x(r)\in\K$. Moreover, as $\mathrm{deg}(f)\leq 2n-1$, 
\[f^*\,\geq\,\rho_n\,=\,\phi^n(f)\,=\,\phi^{\hat{n}}(f)\,=\,\int_{\K} f\,d\phi\,=\,\sum_{i=1}^r
\underbrace{\lambda_i}_{\in (0,1]}\,\underbrace{f(\x(i))}_{\geq f^*}\,\geq f^*\,,\]
(with $\sum_i\lambda_i=1$) which proves that $\Q_n$ is exact, and $f(\x(i))=f^*$ for all $i=1,\ldots,r$.

(ii)  For every $k\leq n$, let $\bmu^{k}:=(\phi^n_{\balpha})_{\vert\balpha\vert\leq 2k}$.
Suppose that $r={\rm rank}(\M_{n-1}(\bmu^{n-1}))={\rm rank}(\M_n(\bphi^n))$. 
As $v=1$ and $\M_{n-1}(g_j\,\bphi^n)\succeq0$ for all $j=1,\ldots,m$,
then by the flat extension theorem of Curto and Fialkow \cite{Curto,Fialkow},
$\bphi^n$ has an atomic representing measure $\mu$ supported on 
$r$ atoms of $\K$, and the $r$ atoms can be extracted; see e.g. \cite{extract}. hence the result follows.

On the other hand, if
${\rm rank}(\M_{n-1}(\bmu^{n-1}))<{\rm rank}(\M_n(\bphi^n))\,(\leq n)$, then
\begin{equation}
\label{iter}
{\rm rank}(\M_{n-1}(\bmu^{n-1}))\leq n-1 =(n-1 -v+1)
\quad\mbox{and}\quad\mu^{n-1}(f)=\phi^n(f)\,=\,f^*.\end{equation}
Observe that $\bmu^{n-1}$ satisfies the condition in Theorem \ref{th1} (with $n-1$ in lieu of $n$).

-- If $n-1=1$ then ${\rm rank}(\M_{n-1}(\bmu^{n-1}))\leq 1$ and in fact $=1$
(because $\mu^{n-1}(1)=1$), which in turn implies that
$\bmu^{n-1}$ has a representing measure  on $\K$ which is the Dirac at some point $\x^*\in\K$,
and the result follows.

-- If $n-1>1$, then $2n-2>2$ and so $\mu^{n-2}(f)=\phi^n(f)=f^*$. 
Moreover as ${\rm rank}(\M_{n-1}(\bmu^{n-1}))\leq n-1 =(n-1 -v+1)$,
then by Theorem \ref{th1}, 
$\bmu^{\widehat{n-1}}$ has a representing measure  on $\K$ supported on $r\leq n-1$ points of $\K$
and as $\mu^{\widehat{n-1}}(f)=\phi^n(f)=f^*$ (because $2n-3\geq 2$), the $r$ points are global minimizers of $\P$.

We may and will repeat the argument with $\bmu^{n-1}$ in lieu of $\bphi^n$. Suppose that 
${\rm rank}(\M_{n-2}(\bmu^{n-2}))={\rm rank}(\M_{n-1}(\bmu^{n-1}))$. Then 
as $\M_{n-2}(g_j\,\bmu^{n-1})\succeq0$ for all $j$,
again by the flat extension theorem of Curto \& Fialkow,
$\bmu^{n-1}$ has an atomic representing measure supported
on $r={\rm rank}(\M_{n-1}(\bmu^{n-1}))$ ($\leq n-1$) points of $\K$
that can be extracted; see \cite{extract}. As  
$\mu^{n-1}(f)=\phi^n(f)=f^*$ and $f\geq f^*$ on $\K$, then all points are global minimizers
and the result follows.

On the other hand, if ${\rm rank}(\M_{n-2}(\bmu^{n-2}))<{\rm rank}(\M_{n-1}(\bmu^{n-1})) \,(\leq n-1)$
then 
\[{\rm rank}(\M_{n-2}(\bmu^{n-2}))\leq n-2
\quad\mbox{and}\quad \mu^{n-2}(f)=\phi^n(f)=f^*\,,\]
because $n-2\geq 1$. That is,  we are back to the case \eqref{iter} but now with $n-2$ instead of $n-1$.
By iterating, we  stop with a measure $\bmu^{n-k}$ and either:
\begin{itemize}
\item $n-k>1$, ${\rm rank}(\M_{n-k}(\bmu^{n-k}))={\rm rank}(\M_{n-k-1}(\bmu^{n-k-1}))$, and in addition,
$\mu^{n-k}(f)=\phi^n(f)=f^*$, $\M_{n-k-1}(g_j\,\bmu^{n-k})\succeq0$, $j=1,\ldots,m$. Hence
by Flat extension theorem of Curto \& Fialkow, $\bmu^{n-k}$ has a representing measure supported on
${\rm rank}(\M_{n-k}(\bmu^{n-k}))$ global minimizers of $\P$,
and the minimizers can be extracted \cite{extract}; hence the result follows.
\item or $n-k=1$ in which case ${\rm rank}(\M_{n-k}(\bmu^{n-k}))=1$
because $\mu^{n-k}(1)=\phi^n(1)=1$. Then $\bmu^{n-k}$ is represented by the Dirac  measure
at the point $\x^*=(\mu^{n-k}(x_1),\ldots,\mu^{n-k}(x_d))$, and as $v=1$,
$\M_{n-k-1}(g_j\,\bmu^{n-k})=g_j(\x^*)\geq 0$ for every $j=1,\ldots,m$. Hence $\x^*\in\K$,
$\mu^{n-k}(f)=\phi^n(f)=f^*$, and so the result follows.
\end{itemize}
\end{proof}

\bibliographystyle{amsplain} 
\bibliography{lasserrebib1.bib}       % Bibliography file (usually '*.bib')
\end{document}